\begin{document}

\newtheorem{theorem}{Theorem}[section]
\newtheorem{corollary}[theorem]{Corollary}
\newtheorem{definition}[theorem]{Definition}
\newtheorem{conjecture}[theorem]{Conjecture}
\newtheorem{question}[theorem]{Question}
\newtheorem{lemma}[theorem]{Lemma}
\newtheorem{fact}[theorem]{Fact}
\newtheorem{property}[theorem]{Property}
\newtheorem{proposition}[theorem]{Proposition}
\newtheorem{quest}[theorem]{Question}
\newtheorem{example}[theorem]{Example}
\newenvironment{proof}{\noindent {\bf
Proof.}}{\rule{2.5mm}{2.5mm}\par\medskip}
\newcommand{\remark}{\medskip\par\noindent {\bf Remark.~~}}
\newcommand{\pp}{{\it p.}}
\newcommand{\de}{\em}

\newcommand{\ec}{{\rm ecc}}
\newcommand{\Near}{{\rm Near}}
\renewcommand{\appendix}{\bf \Large Appendix}

\title{On the computational complexity of the Steiner $k$-eccentricity}


\author{Xingfu Li$^a$, Guihai Yu$^{a}$, Aleksandar Ili\' c$^b$, Sandi Klav\v{z}ar$^{c,d,e,\dag}$ \\ \\
{\small  $^{a}$College of Big Data Statistics, Guizhou University of Finance and Economics}\\
{\small Guiyang, Guizhou, 550025, China.}\\
 {\small E-mail: { \tt xingfuli@mail.gufe.edu.cn; yuguihai@126.com}}\\
 {\small $^b$Facebook Inc, Menlo Park 94025, California, USA}\\
 {\small E-mail: \tt{aleksandari@gmail.com}}\\
 {\small  $^c$ Faculty of Mathematics and Physics, University of Ljubljana, Slovenia}\\
{\small $^{d}$ Institute of Mathematics, Physics and Mechanics, Ljubljana, Slovenia} \\
{\small $^{e}$ Faculty of Natural Sciences and Mathematics, University of Maribor, Slovenia}\\
{\small {\tt sandi.klavzar@fmf.uni-lj.si}}}

\maketitle

\begin{abstract}
 The Steiner $k$-eccentricity of a vertex $v$ of a graph $G$ is the maximum Steiner distance over all $k$-subsets of $V (G)$ which contain $v$. A linear time algorithm for calculating the Steiner $k$-eccentricity of a vertex on block graphs is presented. For general graphs, an $O(n^{\nu(G)+1}(n(G) + m(G) + k))$ algorithm is designed, where $\nu(G)$ is the cyclomatic number of $G$.  A linear algorithm for computing the Steiner $3$-eccentricities of all vertices of a tree is also presented which improves the quadratic algorithm from [Discrete Appl.\ Math.\  304  (2021) 181--195].
\end{abstract}

{\bf Key words}: Steiner tree; Steiner $k$-eccentricity; block graph; tree; algorithm; computational complexity

\section{Introduction}

Every graph $G = (V(G), E(G))$ in this paper is simple and undirected. The order of $G$ will be denoted by $n(G)$ and the size of $G$ by $m(G)$. The {\em cyclomatic number}, $\nu(G)$, of $G$ is the minimum number of edges of $G$ whose removal makes $G$ acyclic. If $G$ is connected, then $\nu(G) = m(G) - n(G) + 1$. (The cyclomatic number of $G$ can alternatively be defined as the dimension of its cycle space.) If every block of $G$ is a clique, then $G$ is a \emph{block graph}. The \emph{distance} $d_{G}(u,v)$ between vertices $u$ and $v$ in $G$ is the length of a shortest $u,v$-path.  

The {\em eccentricity} $\ec_G(v)$ of a vertex $v$ in $G$ is the maximum distance between $v$ and all the other vertices of $G$. We refer to~\cite{Pei2019OnACandSub, Dankelmann2014, Dankelmann2019Osaye, Dankelmann2019AverageEK, Du2013, Du2016, Ili2012, Smith2016, Tang2012} for different investigations of the eccentricity. In this work we study a generalization of the eccentricity, the Steiner $k$-eccentricity. Its definition is based on Steiner trees which are in turn defined as follows. If $S\subseteq V(G)$, then a subgraph $T$ of $G$ is a {\em Steiner $S$-tree}, if $T$ is a minimum connected subgraph of $G$ which spans all vertices from $S$. Every vertex from $S$ is called a \emph{terminal} of $T$, and the set $S$ is the \emph{terminal set} of $T$. The {\em Steiner $k$-eccentricity}, $\ec_{k}(v,G)$, of a vertex $v$ is the maximum size over all Steiner $S$-trees, where $|S| = k$ and $v\in S$, that is, 
$$\ec_{k}(v,G) = \max_{S\subseteq V(G) \atop |S|=k,v\in S} \{ m(T):\ T\ {\rm is\ a\ Steiner}\ S\hbox{-}{\rm tree}\}\,.$$
(For additional aspects of the Steiner distance see~\cite{gologranc-2018, klavzar-2021, li-2016, mao-2021, weis-2020}.) Note that  $\ec_G(v) = \ec_{2}(v,G)$. A Steiner $S$-tree $T$ that realizes $\ec_{k}(v,G)$ is a \emph{Steiner $k$-eccentricity tree of $v$}, we will shortly say that $T$ is a  \emph{Steiner $k$-ecc $v$-tree}. The $k$-set $S$ corresponding the the Steiner $k$-ecc $v$-tree is a \emph{Steiner $k$-ecc $v$-set} in $G$. The problem to find a Steiner $k$-eccentricity tree of a given vertex is referred to as the \emph{Steiner $k$-eccentricity tree} (k-ST) problem. The decision version of the \emph{Steiner k-eccentricity tree problem} ($k$-ST) is presented in Table~\ref{k-STP}.

\begin{table}[ht!]
\centering  
	\caption{The Steiner $k$-eccentricity tree problem ($k$-ST)}  
	\label{k-STP}  
	\begin{tabular}{|l|}
		\hline
		Instance: Graph $G$, $v\in V(G)$, $k\in [n(G)]$, constant $c$.\\
		\hline
        Question: Is there a Steiner $S$-tree $T$, where $|S| = k$ and $v\in S$, such that $m(T)\ge c$? \\
        \hline
	\end{tabular}
\end{table}

The minimum Steiner tree problem is a well-known NP-hard problem~\cite{Garey1979}, but the hardness of the  $k$-ST problem is still unknown. In~\cite{li2020average}, a linear time algorithm was designed to find the optimal value of the 3-ST problem on trees, while in~\cite{li2020steiner} the result was extended to the $k$-ST problem. In the following section we design a linear time algorithm for the $k$-ST problem on block graphs. In the subsequent section we present an algorithm for the problem on general graphs with the time complexity $O(n^{\nu(G)+1}(n(G) + m(G) + k))$. In Section~\ref{all-ecc} we present a linear algorithm to calculate the Steiner $3$-eccentricity for all vertices in a weighted tree. This improves the corresponding quadratic algorithm for (unweighted) trees from~\cite{li2020average}. We conclude the paper by giving several directions for future work.

\section{A linear algorithm for block graphs}
\label{block-graphs}

In this section, we devise a linear-time algorithm to solve the $k$-ST problem on block graphs. The main idea is to reduce the problem from a block graph $G$ to a special spanning tree $T$ such that the equqality $\ec_{k}(v,G) = \ec_{k}(v,T)$ holds, and then to invoke the algorithm from~\cite{li2020steiner}. 

Let $G$ be a block graph. If $v\in V(G)$ and $B$ is a block of $G$, then let $\Near_G(v,B)$ be a nearest vertex to $v$ in the block $B$, see Fig.~\ref{E-tree} for an example. We first observe that $\Near_G(v,B)$ is unique. 

\begin{property}\label{unique-nerast-vertex}
If $G$ is a block graph, $v\in V(G)$, and $B$ a block of $G$, then $\Near_G(v,B)$ is unique.
\end{property}

\begin{proof}
If $v\in V(B)$, then clearly $\Near_G(v,B)=v$ is unique. If $v\notin V(B)$, then let $P$ be a shortest path between $v$ and the block $B$. Then the end-vertex $x$ of $P$, $x\ne v$, is a cut vertex of $B$ which in turn implies that  $\Near_G(v,B)=x$ is again unique. 
\end{proof}

Let $v$ be a vertex of a block graph $G$. For every block $B$ of $G$ remove every edge which is not incident with $\Near_G(v,B)$ and denote the resulting graph by $T(v,G)$, for an example see Fig.~\ref{E-tree} again. Considering shortest paths between $v$ and the cut vertices of $G$ we infer that $T(v,G)$ is connected. Moreover, it is also clear (having in mind that $G$ is a block graph) that $T(v,G)$ has no cycle. Hence $T(v,G)$ is a spanning tree of $G$. In addition, with Property~\ref{unique-nerast-vertex} in hands it immediately follows from the construction that $T(v,G)$ is unique for each vertex $v$ of a block graph $G$.  

\begin{figure}[t!]
\begin{center}
\epsfig{file=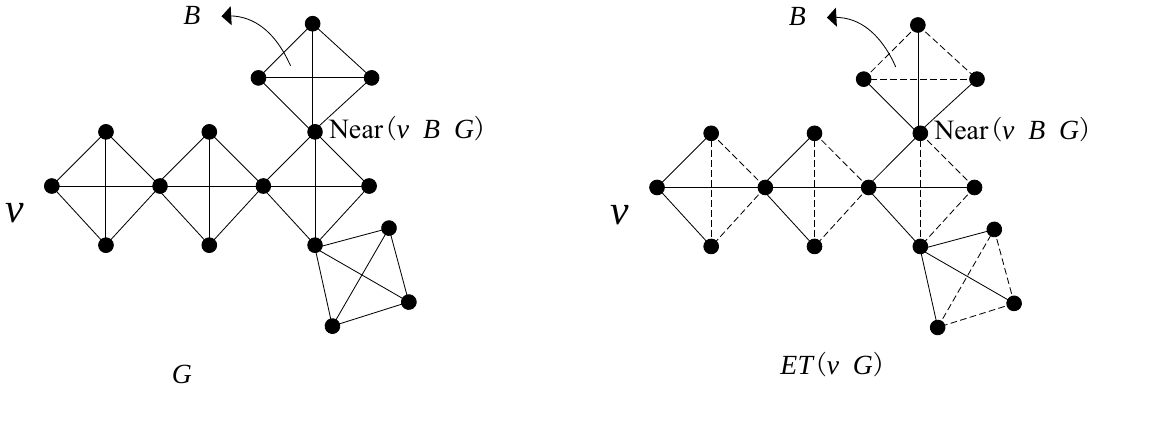, scale=1.2}
\end{center}
\vspace{-1.0cm}
\caption{Block graph $G$ (left) and $T(v,G)$ (right). The dashed lines indicate the edges which must be  removed from $G$ to construct $T(v,G)$.}
\label{E-tree}
\end{figure}

We are now ready for the key result needed for our algorithm for block graphs.

\begin{theorem}\label{maintain-optimal}
Let $v$ be a vertex of a block graph $G$. Then a Steiner $k$-ecc $v$-tree in $T(v,G)$ is also a Steiner $k$-ecc $v$-tree in $G$.
\end{theorem}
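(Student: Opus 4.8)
The plan is to reduce the statement to the single identity
$$d_G(S) = d_{T(v,G)}(S) \qquad \text{for every } S\subseteq V(G) \text{ with } v\in S \text{ and } |S|=k,$$
where $d_H(S)$ denotes the Steiner distance of $S$ in a graph $H$ (i.e.\ the minimum size of a connected subgraph of $H$ spanning $S$). Granting this identity, the theorem follows quickly. If $T$ is a Steiner $k$-ecc $v$-tree in $T(v,G)$ with terminal set $S$, then $m(T)=d_{T(v,G)}(S)$ equals the maximum of $d_{T(v,G)}(\cdot)$ over all $k$-subsets of $V(G)$ containing $v$; by the identity this number equals $d_G(S)$ and also equals the maximum of $d_G(\cdot)$ over the same family, that is, $\ec_{k}(v,G)$. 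Moreover $T$, being a connected subgraph of $G$ that spans $S$ and has $d_G(S)$ edges, is a Steiner $S$-tree in $G$. Hence $T$ is a Steiner $k$-ecc $v$-tree in $G$.

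For the identity, one inequality is immediate: since $T(v,G)$ is a spanning tree of $G$, any Steiner $S$-tree of $T(v,G)$ is a connected subgraph of $G$ spanning $S$, so $d_{T(v,G)}(S)\ge d_G(S)$. For the reverse inequality I would start from a minimum Steiner $S$-tree $T^{*}$ in $G$ and rebuild it inside $T(v,G)$ without increasing its size. Here I would invoke the standard fact that in a block graph the part of a tree inside a block is connected: a $T^{*}$-path between two vertices of a block $B$ cannot leave $B$, since leaving and re-entering $B$ would force a repeated vertex (the unique cut vertex of $B$ on that side). Thus, writing $W_B=V(T^{*})\cap V(B)$, the part of $T^{*}$ inside each block $B$ is a subtree with $|W_B|-1$ edges, and I would replace it by the star on $W_B$ centered at $x_B:=\Near_G(v,B)$ — these star edges are exactly the edges of $T(v,G)$ lying inside $B$.

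The crucial point, and where the hypothesis $v\in S$ is really used, is that this replacement does not enlarge the vertex set, i.e.\ $x_B\in W_B$ whenever $T^{*}$ meets $B$ in at least two vertices. Indeed $T^{*}$ contains $v$; if $v\notin V(B)$ then $x_B$ is the cut vertex of $G$ lying in $B$ that separates $v$ from $V(B)\setminus\{x_B\}$, so the (unique) $T^{*}$-path from $v$ to any vertex of $W_B$ must pass through $x_B$, whence $x_B\in V(T^{*})\cap V(B)=W_B$ (and if $v\in V(B)$ then $x_B=v\in W_B$ trivially). Consequently each block-wise replacement keeps the same vertex set and the same number of edges.

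It then remains to observe that gluing the stars back along the shared cut vertices produces a connected subgraph of $T(v,G)$ that still spans $S$ (the block–cut structure of the pieces is unchanged, and each piece is internally connected) and has exactly $m(T^{*})=d_G(S)$ edges, since distinct blocks share no edges and so the edge counts add up just as for $T^{*}$. Being a connected subgraph of the tree $T(v,G)$ spanning $S$, it contains the minimal subtree of $T(v,G)$ spanning $S$, so $d_{T(v,G)}(S)\le d_G(S)$. Combined with the trivial inequality this proves the identity, and hence the theorem. The only genuine subtleties are the block-connectivity fact for $T^{*}$ and the verification that the star centers $\Near_G(v,B)$ already lie in $T^{*}$; the size bookkeeping and the reassembly are routine.
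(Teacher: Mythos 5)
Your proof is correct and rests on the same key idea as the paper's: within each block $B$, the edges of a Steiner tree can be redirected so as to be incident with $\Near_G(v,B)$ without changing their number, thereby turning a tree in $G$ into a subtree of $T(v,G)$ of equal size. Your write-up is in fact somewhat more careful than the paper's contradiction argument, since you prove the stronger identity $d_G(S)=d_{T(v,G)}(S)$ for every $k$-set $S$ containing $v$ and explicitly verify that the star center $\Near_G(v,B)$ already lies in the tree being modified (using $v\in S$) --- a point the paper's edge-by-edge swap leaves implicit.
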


\begin{proof}
Let $T_{1}$ be a Steiner $k$-ecc $v$-tree in $T(v,G)$ and suppose on the contrary that $T_{1}$ is not a Steiner $k$-ecc $v$-tree in $G$.  Let $T_{2}$ be a Steiner $k$-ecc $v$-tree in $G$. Then we have $m(T_{1}) \not= m(T_{2})$. Moreover, since $T_1$ is also a subtree of $G$,  we must have $m(T_{1}) < m(T_{2})$. We are now going to construct a tree $T'_{2}$ of $T(v,G)$ with $m(T'_{2}) > m(T_{1})$, which will contradict the fact that $T_{1}$ is a Steiner $k$-ecc $v$-tree in $T(v,G)$.

Construct the tree $T'_{2}$ from $T_{2}$ through the following procedure. For every block $B$ of $G$, if there is an edge $e\in E(B)\cap E(T_{2})$ such that neither endpoint of $e$ is  the vertex $\Near_G(v,B)$, then delete the edge $e$ from $T_{2}$, and add an edge between one endpoint of $e$ and $\Near_G(v,B)$. After finishing the whole procedure for all blocks of $G$, the tree $T'_{2}$ is constructed. Since the edge deletion and addition occur pairwise, $m(T'_{2}) = m(T_{2})$. Since $m(T_{1}) < m(T_{2})$, we have the announced contradiction $m(T_{1}) < m(T'_{2})$. 
\end{proof}

Theorem~\ref{maintain-optimal} directly leads to Algorithm \ref{k-ecc-block}.

\medskip
\begin{algorithm}[H]\label{k-ecc-block}
\caption{k-ECC-Block($v$, $G$, $k$)}
\LinesNumbered 
\KwIn{Block graph $G$, vertex $v\in V(G)$, an integer $k\geq 3$. }
\KwOut{$\ec_k(v,G)$.}
Determine $T(v,G)$\; \label{find-T}
Return $\ec_k(v,T(v,G))$;\label{invoke-tree-alg} 
\end{algorithm}

\begin{theorem}\label{complexity-on-block-graphs}
If $G$ is a block graph and $v\in V(G)$, then Algorithm~\ref{k-ecc-block} computes $\ec_k(v,G)$ and can be implemented to run in $O(k(n(G) + m(G)))$ time. 
\end{theorem}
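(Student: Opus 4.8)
The plan is to establish correctness and then analyze the running time line by line. Correctness of Algorithm~\ref{k-ecc-block} is essentially immediate from Theorem~\ref{maintain-optimal}: the tree $T(v,G)$ is a spanning tree of $G$, so $\ec_k(v,T(v,G)) \le \ec_k(v,G)$, while Theorem~\ref{maintain-optimal} says that a Steiner $k$-ecc $v$-tree in $T(v,G)$ is also one in $G$, giving the reverse inequality $\ec_k(v,G) \le \ec_k(v,T(v,G))$. Hence $\ec_k(v,G) = \ec_k(v,T(v,G))$, and returning the latter value in line~\ref{invoke-tree-alg} is correct.

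For the time complexity, I would split the work into the two lines of the algorithm. For line~\ref{find-T}, I need to show $T(v,G)$ can be built in $O(n(G)+m(G))$ time. The key subtask is computing $\Near_G(v,B)$ for every block $B$. The plan is to run a single breadth-first search (or, in the block-graph setting, exploit the block-cut tree) from $v$ to obtain $d_G(v,x)$ for all vertices $x$; simultaneously one computes the block-cut tree structure in linear time via the standard biconnected-components algorithm. For each block $B$, the nearest vertex $\Near_G(v,B)$ is then either $v$ itself (if $v\in V(B)$) or the cut vertex separating $v$ from $B$, which can be identified as the vertex of $B$ minimizing $d_G(v,\cdot)$; reading this off costs $O(|V(B)|)$ per block, summing to $O(n(G)+m(G))$ overall. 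Having the $\Near$ values, constructing $T(v,G)$ amounts to scanning each edge once and keeping it precisely when it is incident with $\Near_G(v,B)$ for the block $B$ containing it, again $O(n(G)+m(G))$.

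For line~\ref{invoke-tree-alg}, I would invoke the linear-time algorithm for the $k$-ST problem on trees from~\cite{li2020steiner}. Since $T(v,G)$ has $n(G)$ vertices and $n(G)-1$ edges, that subroutine runs in $O(n(G)+k)$ time, which is absorbed into $O(k(n(G)+m(G)))$. Combining the two lines gives the claimed bound $O(k(n(G)+m(G)))$.

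The main obstacle I expect is the bookkeeping in line~\ref{find-T}: carefully arguing that the block-cut tree of $G$, the distances $d_G(v,\cdot)$, and hence all the $\Near_G(v,B)$ values can genuinely be obtained in linear time, and that each edge can be assigned to its (unique) block so that the filtering step is $O(m(G))$ rather than, say, $O(m(G))$ per block. This is where I would be most explicit, citing the standard linear-time biconnected-components decomposition and noting that in a block graph each block is a clique, so the cut vertex of a block nearest to $v$ is well defined by Property~\ref{unique-nerast-vertex}. The Steiner-tree computation on the resulting tree is then a black box from the cited reference, so no further difficulty arises there.
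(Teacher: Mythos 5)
Your proposal is correct in substance and follows the same two-step structure as the paper's proof: correctness is deduced from Theorem~\ref{maintain-optimal}, Step~\ref{find-T} is shown to run in linear time, and Step~\ref{invoke-tree-alg} is delegated to the tree algorithm of~\cite{li2020steiner}. The one place where you genuinely diverge is the implementation of Step~\ref{find-T}: the paper constructs $T(v,G)$ via a modified depth-first search (Algorithm~\ref{Get-Tree}) that, while exploring from the current vertex, deletes every edge of the surrounding clique not incident with it, whereas you compute the block--cut decomposition together with the distances $d_G(v,\cdot)$ and then filter the edges block by block using Property~\ref{unique-nerast-vertex}. Both are linear; your route is more standard and makes the linearity of computing all the $\Near_G(v,B)$ explicit, while the paper's recursive edge-deletion is more self-contained but its running time is asserted rather than argued. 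One small slip in your correctness paragraph: you justify $\ec_k(v,T(v,G)) \le \ec_k(v,G)$ by the fact that $T(v,G)$ is a spanning tree, but spanning trees give the \emph{opposite} inequality --- deleting edges can only enlarge minimum Steiner trees, so $\ec_k(v,T) \ge \ec_k(v,G)$ for every spanning tree $T$ (cf.\ Theorem~\ref{ecc-on-graphs}). This does not damage your argument, because Theorem~\ref{maintain-optimal} already yields the equality on its own: a Steiner $k$-ecc $v$-tree of $T(v,G)$ has size $\ec_k(v,T(v,G))$, and being by that theorem also a Steiner $k$-ecc $v$-tree of $G$, it has size $\ec_k(v,G)$. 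You should simply drop the spanning-tree inequality rather than invoke it in the wrong direction.
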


\begin{proof}
The correctness of the algorithm follows from Theorem~\ref{maintain-optimal}. 

Since $T = T(v,G)$ is a tree, Step \ref{invoke-tree-alg} can be implemented in $O(k(n(T) + m(T))$ time by invoking the corresponding algorithm from~\cite{li2020steiner}. As for Step~\ref{find-T}, to determine $T(v,G)$ efficiently, Algorithm~\ref{Get-Tree} modifies the depth-first search (DFS) algorithm, and runs in linear time. 
\end{proof}

\medskip
\begin{algorithm}[H]\label{Get-Tree}
\caption{Get-Tree($v, G$)}
\LinesNumbered 
\KwIn{Block graph $G$, vertex $v\in V(G)$.}
\KwOut{$T(v,G)$.}
Mark all vertices as '\emph{unvisited}', and mark the vertex $v$ as '\emph{visited}'\;
\For {each unvisited vertex $u\in N_{G}(v)$}{
    \For {each vertex $w\in N_{G}(u)$}{
        \If {$wv\in E(G)$}{
            Delete $uw$ from $G$\;
        }
    }
    Get-Tree($u, G$)\;
}
\Return $G$;
\end{algorithm}

\section{On general graphs}
\label{general-graphs}

The basic property that allows a fast algorithm for calculating the Steiner $k$-eccentricity of a vertex in a tree is that every Steiner $k$-ecc $v$-tree  contains a Steiner $(k-1)$-ecc $v$-tree~\cite{li2020steiner}. This property does not hold in general graphs. In fact, as the example from Fig.~\ref{counter-example} demonstrates, the property does not hold even on unicycle graphs. 

\begin{figure}[ht!]
\begin{center}
\epsfig{file=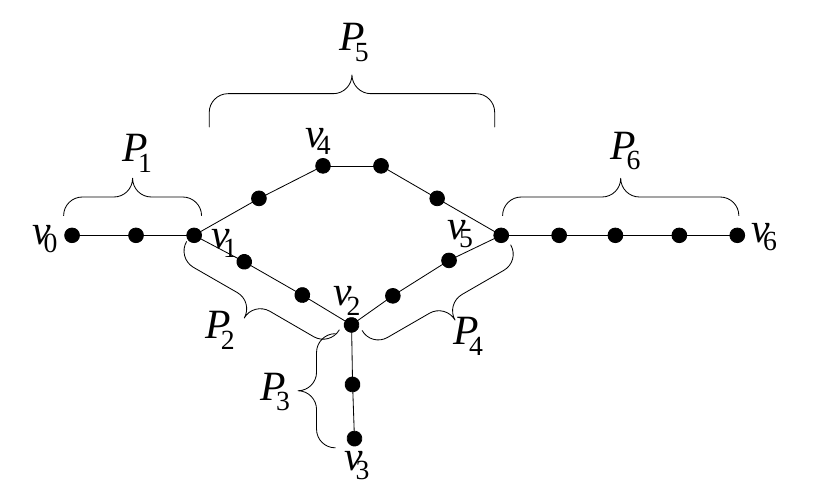, scale=1.0}
\end{center}
\vspace{-0.5cm}
\caption{The Steiner $3$-ecc tree of the vertex $v_{0}$ is formed by the paths $P_{1}$, $P_{2}$, $P_{3}$, $P_{4}$, and $P_{6}$. The Steiner $2$-ecc tree of $v_{0}$ is formed by the paths $P_{1}$, $P_{5}$ and $P_{6}$. Hence the Steiner $3$-ecc $v_{0}$-tree does not contain a Steiner $2$-ecc $v_{0}$-tree.}
\label{counter-example}
\end{figure}

This example illustrates that it  could be difficult to find a property that would lead to a fast algorithm for calculating the Steiner $k$-eccentricity of a vertex on general graphs. In this section, we present two algorithms to solve the $k$-ST problem in general graphs. The first one is a brute-forced method, the other one reduces the problem from general graphs to trees. The running time of the brute-forced method grows exponentially with $k$, while the running time of the latter algorithm grows exponentially in $\nu(G)$. 

\subsection{Brute-force algorithm}

The definition of the Steiner $k$-eccentricity of a vertex leads to a direct, brute-force algorithm as follows. Let $v$ be a vertex for which we are going to calculate the Steiner $k$-eccentricity in a graph $G$. Initially, enumerate all ($k-1$)-subsets $S$ in $V(G)\setminus \{v\}$. For each of these sets $S$ then invoke an algorithm to find a minimum Steiner tree for the set $S\cup\{v\}$. Finally, choose the maximum size among all these minimum Steiner trees. 

The Steiner tree problem is a well-known NP-hard problem~\cite{Garey1979}.   Erickson, Monma, and Veinott~\cite{Erickson1987SendandSplitMF} designed an exact algorithm for the Steiner tree problem with running time $O(3^{k}n+2^{k}(m+n\log n))$, where $n = n(G)$, $m = m(G)$, and $k$ is the number of terminals. Fuchs et al.~\cite{Fuchs2007DynamicPF} and Vygen~\cite{Vygen2011FasterAF} followed by exact algorithms with running times $O(2^{k+(k/2)^{1/3}(\ln n)^{2/3}})$ and $O(nk2^{k+\log_{2}k\log_{2}n})$, respectively. Note that the running time of all these algorithms grows exponentially with the number of terminals. So the running time of the  brute-forced method also grows exponentially with respect to the number of terminals.

\subsection{Reducing to trees}

We now devise a novel algorithm whose running time does not grow exponentially on the number of terminal, that is, on $k$. The key for the algorithm is the following result, where ${\cal T}(G)$ denotes the set of all spanning trees of a connected graph $G$. 

\begin{theorem}\label{ecc-on-graphs}
If $G$ is a connected graph, then 
\begin{equation}
\ec_{k}(v,G) = \min\{\ec_{k}(v,T):\ T\in {\cal T}(G)\}\,.
\end{equation}
\end{theorem}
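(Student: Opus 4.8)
The plan is to establish the equality in Theorem~\ref{ecc-on-graphs} by proving the two inequalities $\ec_{k}(v,G) \le \min\{\ec_{k}(v,T):\ T\in{\cal T}(G)\}$ and $\ec_{k}(v,G) \ge \min\{\ec_{k}(v,T):\ T\in{\cal T}(G)\}$ separately. For the second inequality, I would simply pick a spanning tree $T^*$ of $G$ that realizes the minimum on the right-hand side; since any Steiner $S$-tree in $T^*$ is in particular a connected subgraph of $G$ spanning $S$, its size is at least the size of a Steiner $S$-tree in $G$, and taking the maximum over all admissible $k$-sets $S$ containing $v$ gives $\ec_{k}(v,T^*) \le \ec_{k}(v,G)$. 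Wait --- that is the wrong direction, so the second inequality is the substantive one and needs the opposite argument: I must show that for \emph{every} spanning tree $T$ we have $\ec_{k}(v,T) \ge \ec_{k}(v,G)$, i.e. no spanning tree can have smaller Steiner $k$-eccentricity than $G$ itself.

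So the real work splits as follows. \emph{Easy direction:} there exists a spanning tree $T$ with $\ec_{k}(v,T) \le \ec_{k}(v,G)$, hence $\min_{T}\ec_{k}(v,T) \le \ec_{k}(v,G)$. To get this, take a Steiner $k$-ecc $v$-tree $T_0$ in $G$ (a subtree realizing $\ec_{k}(v,G)$) and extend it to a spanning tree $T$ of $G$; any Steiner $S$-tree in $T$, where $|S|=k$ and $v\in S$, is a connected subgraph of $G$, so its size is at most the size of a Steiner $S$-tree in $G$, which is at most $\ec_{k}(v,G)$; taking the max over $S$ gives $\ec_{k}(v,T) \le \ec_{k}(v,G)$. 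Hmm, that shows $\le$ for this particular $T$, which is exactly what the min needs. \emph{Hard direction:} for every spanning tree $T\in{\cal T}(G)$, $\ec_{k}(v,T)\ge \ec_{k}(v,G)$. Here I would fix an arbitrary $T$ and let $S$ be a Steiner $k$-ecc $v$-set in $G$, so a Steiner $S$-tree in $G$ has size $\ec_{k}(v,G)$. I then want to produce inside $T$ a Steiner $S'$-tree for some $k$-set $S'\ni v$ of size at least $\ec_{k}(v,G)$. The natural candidate is to take the \emph{unique} minimal subtree of $T$ spanning $S$ itself, call it $T\langle S\rangle$, and argue $m(T\langle S\rangle)\ge \ec_{k}(v,G)$; since $\ec_{k}(v,T)\ge m(T\langle S\rangle)$, we are done.

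The crux --- and the step I expect to be the main obstacle --- is proving $m(T\langle S\rangle) \ge \ec_{k}(v,G)$ for an arbitrary spanning tree $T$. The intuition is a cut/path-replacement argument: a Steiner $S$-tree in $G$ of size $\ec_{k}(v,G)$ can be covered by $k-1$ edge-disjoint paths joining the terminals (as used implicitly in the tree algorithms of~\cite{li2020steiner} and visible in Fig.~\ref{counter-example}), and in passing from $G$ to the spanning tree $T$ each such path can only get \emph{longer}, never shorter, because $T$ has no shortcuts that $G$ lacks --- formally, for any two vertices $d_T(x,y)\ge d_G(x,y)$, and more is true at the level of Steiner trees. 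I would make this precise by building $T\langle S\rangle$ incrementally: order the terminals, and successively attach each new terminal to the current subtree via the unique path in $T$; at each step compare with the corresponding attachment step in an optimal Steiner $S$-tree of $G$ and show the $T$-step contributes at least as many edges. One must be careful that the minimal Steiner tree in $G$ need not decompose along terminals in a way that lines up with $T$, so the cleanest route may instead be a global argument: the edge set of any connected subgraph of $G$ spanning $S$, when ``pushed into'' $T$ by replacing each edge $xy$ by the $x,y$-path in $T$, yields a connected spanning subgraph of $T$ containing $S$, whose edge count is at least $m(T\langle S\rangle)$; combined with the fact that this pushed object has size at most $\ldots$ --- no, that bounds the wrong way. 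I will therefore lean on the direct path-lengthening estimate $m(T\langle S\rangle) \ge \ec_{k}(v,G)$ proved via the decomposition into $k-1$ bridging paths and the inequality $d_T \ge d_G$, which is the technical heart of the proof; the rest is bookkeeping.
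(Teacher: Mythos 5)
Your two-inequality frame is right, but one half is badly overcomplicated and the other half is fatally wrong.

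For $\ec_{k}(v,G)\le\min\{\ec_{k}(v,T): T\in{\cal T}(G)\}$ --- what you call the hard direction --- the step you single out as the technical heart, $m(T\langle S\rangle)\ge\ec_{k}(v,G)$, is immediate and needs none of the path-decomposition machinery: $T\langle S\rangle$ is a connected subgraph of $G$ spanning $S$, so by the minimality built into the definition of a Steiner $S$-tree its size is at least the Steiner distance of $S$ in $G$, which equals $\ec_{k}(v,G)$ when $S$ is a Steiner $k$-ecc $v$-set. This one-line observation is exactly the paper's argument (phrased there as a contradiction); your plan of covering an optimal Steiner tree of $G$ by $k-1$ bridging paths and aligning them with $T$ is unnecessary, and the alignment difficulty you anticipate never has to be faced.

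The genuine gap is in your ``easy direction''. Your justification that extending a Steiner $k$-ecc $v$-tree of $G$ to a spanning tree $T$ yields $\ec_{k}(v,T)\le\ec_{k}(v,G)$ relies on the claim that a Steiner $S$-tree in $T$ has size \emph{at most} that of a Steiner $S$-tree in $G$; the inequality runs the other way, since a minimum connected subgraph of $T$ spanning $S$ is merely \emph{some} connected subgraph of $G$ spanning $S$ and hence at least as large as the minimum in $G$. Moreover, this direction cannot be repaired: the inequality $\min_{T}\ec_{k}(v,T)\le\ec_{k}(v,G)$ fails in general. Take $G=C_{4}$ and $k=3$. Any three vertices of $C_{4}$ induce a path, so $\ec_{3}(v,C_{4})=2$ for every $v$; but every spanning tree of $C_{4}$ is a path on four vertices, and every vertex lies in a $3$-set containing both endpoints of that path, so $\ec_{3}(v,T)=3$ for every $T\in{\cal T}(C_{4})$. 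Thus only the first inequality holds. Be aware that the paper's own proof establishes precisely that first inequality and then asserts equality with no argument, so it leaves open the same direction your proposal fails on; as stated, the equality in the theorem does not appear to be provable.
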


\begin{proof}
Let $G$ be a graph, $v\in V(G)$, and $T$ s spanning tree of $G$. Then we claim that the size of a Steiner $k$-ecc $v$-tree in $G$ is not larger than the size of a Steiner $k$-ecc $v$-tree in $T$.

Suppose on the contrary that there is a Steiner $k$-ecc $v$-tree $T_{1}$ in $T$ such that $m(T_{1})$ is less than the size of a Steiner $k$-ecc $v$-tree in $G$. Let $T_{v}$ be a Steiner $k$-ecc $v$-tree in $G$ and $S_{v}$ be the corresponding Steiner $k$-ecc $v$-set. Then we have $m(T_{1}) < m(T_{v})$. Since $T$ is a spanning tree of $G$, we clearly have $S_{v}\subseteq V(T)$. Moreover, the size of the minimum Steiner tree on the set $S_{v}$ in $T$ is not less than the size of $T_{v}$. Let $T_{2}$ be a minimum Steiner tree on the set $S_{v}$ in $T$. Then we have $m(T_{v}) \leq m(T_{2})$ and therefore, $m(T_{1}) < m(T_{2})$. This contradicts to the assumption that $T_{1}$ is a Steiner $k$-ecc $v$-tree in $T$, hence the claim is proved.  

From the claim it now follows that the value of $\ec_{k}(v,G)$ is equal to $\min\{\ec_{k}(v,T):T\in {\cal T}(G)\}$.
\end{proof}

In order to determine the Steiner $k$-eccentricity of a vertex in a graph $G$, Theorem~\ref{ecc-on-graphs} says that it suffices to calculate the Steiner $k$-eccentricity of the vertex in every spanning tree. In our algorithm we  enumerate all possible edge sets of size $\nu(G)$ rather than enumerating all spanning trees. Moreover, we calculate the Steiner $k$-eccentricity of a vertex $v$ in a spanning tree as soon as the spanning tree is enumerated, and maintain the maximum Steiner $k$-eccentricity of $v$ over all currently enumerated spanning trees. The enumerating method is based on the following recursive equation. (Recall that if a graph $G$ is a tree itself, then one can invoke the linear time algorithm from~\cite{li2020steiner} to calculate the Steiner $k$-eccentricity of $v$.)

\begin{equation}\label{base}
 \ec_{k}(v,G)=\begin{cases}
                \ec_{k}(v,G); & G\ {\rm is\ a\ tree},\\
                \min\limits_{e\in E(C)}\{\ec_{k}(v,G-e)\};   & {\rm otherwise}\,,
              \end{cases}
\end{equation}
where $C$ is a cycle of $G$. The whole procedure is summarized in Algorithm~\ref{k-ecc}, where the parameter \emph{current-opt} is initialized to be zero and used to store the currently maximum Steiner $k$-eccentricity.

\medskip
\begin{algorithm}[H]\label{k-ecc}
\caption{k-ECC($v$, $G$, $k$, \emph{current-opt})}
\LinesNumbered 
\KwIn{Graph $G$, $v \in V(G)$,  integer $k\geq 3$. 
}
\KwOut{The Steiner $k$-eccentricity of $v$ in $G$.}
\If {$G$ is a tree}{\label{end-recur-start}
    \emph{temp} $\leftarrow$ Steiner $k$-eccentricity of $G$\;
    \If {\emph{temp} $<$ \emph{current-opt}}{
        \emph{current-opt} $\leftarrow$ \emph{temp}\:
    }
    \Return \emph{current-opt}\;
}\label{end-recure-done}
\Else{\label{start-recur}
    $C$ $\leftarrow$ simple cycle of $G$\;
    \For{each edge $e$ of $C$}{
        $H$ $\leftarrow$ $G-e$\;
        k-ECC($v$, $H$, $k$, \emph{current-opt})\;
        }
    }\label{stop-recur}
\end{algorithm}

\begin{theorem}\label{complexity}
Let $G$ be a connected graph and $v\in V(G)$. Then Algorithm~\ref{k-ecc} calculates the Steiner $k$-eccentricity of $v$ and can be implemented to run in $O(n^{\nu(G)+1}(n(G) + m(G) + k))$ time. 
\end{theorem}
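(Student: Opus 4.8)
The plan is to establish the two assertions of Theorem~\ref{complexity} separately: first correctness, and then the running time bound. For correctness, I would argue by induction on the cyclomatic number $\nu(G)$. If $\nu(G)=0$, then $G$ is a tree and the algorithm returns the Steiner $k$-eccentricity computed directly, which is correct. If $\nu(G)\ge 1$, then $G$ contains a simple cycle $C$; for each $e\in E(C)$, the graph $G-e$ is still connected (since $e$ lies on a cycle) and satisfies $\nu(G-e)=\nu(G)-1$, so by induction the recursive calls correctly compute $\ec_k(v,G-e)$ and update \emph{current-opt} to the minimum of these values. By the recursive equation~\eqref{base}, which in turn rests on Theorem~\ref{ecc-on-graphs}, this minimum equals $\ec_k(v,G)$. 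One subtle point to address carefully is that \emph{current-opt} is passed by reference and accumulates the running minimum across all leaves of the recursion tree; I would note that every leaf corresponds to a spanning tree obtained by deleting one edge from each of $\nu(G)$ independent cycles, and that the set of spanning trees so reached, while possibly with repetitions and not necessarily all of $\mathcal{T}(G)$, still contains one achieving the minimum in Theorem~\ref{ecc-on-graphs} — indeed every spanning tree is reachable, since contracting a spanning tree leaves exactly $\nu(G)$ non-tree edges which can be peeled off one cycle at a time.

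For the running time, the recursion tree has branching factor at most $|E(C)|\le n(G)$ and depth exactly $\nu(G)$ (each recursive call strictly decreases the cyclomatic number by one), so the number of leaves is $O(n(G)^{\nu(G)})$ and the total number of nodes in the recursion tree is $O(n(G)^{\nu(G)})$ as well (a geometric-type bound dominated by the leaves, up to the constant factor $n(G)/(n(G)-1)$). At each internal node we must find a simple cycle, which can be done in $O(n(G)+m(G))$ time via a DFS that stops at the first back edge, and perform a constant number of edge deletions and graph copies, also $O(n(G)+m(G))$. At each leaf we invoke the linear-time tree algorithm from~\cite{li2020steiner}, costing $O(k(n(G)+m(G)))$, which we may bound by $O(n(G)+m(G)+k)$ only if we are slightly more careful — actually the tree algorithm runs in $O(n(T)+m(T)+k)$ time in the weighted setting, or we simply absorb the $k$ factor; in either reading the per-node cost is $O(n(G)+m(G)+k)$. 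Multiplying the $O(n(G)^{\nu(G)})$ nodes by the $O(n(G)+m(G)+k)$ per-node cost, and accounting for the extra factor of $n(G)$ that comes from the branching at the root level (so that the leaf count is more precisely $O(n(G)^{\nu(G)})$ but the work-weighted sum telescopes to $O(n(G)^{\nu(G)+1})$ when one also counts the deletions performed along each root-to-leaf path), yields the claimed $O(n(G)^{\nu(G)+1}(n(G)+m(G)+k))$ bound.

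The main obstacle I anticipate is the bookkeeping in the running-time analysis: the stated exponent is $\nu(G)+1$ rather than the naive $\nu(G)$, so one must identify where the extra factor of $n(G)$ legitimately appears. I would pin this down by observing that along each of the $O(n(G)^{\nu(G)})$ root-to-leaf paths there are $\nu(G)$ internal nodes, each doing $O(n(G)+m(G)+k)$ work, but the recursion also re-examines and copies $G-e$ for each of up to $n(G)$ choices of $e$ at the first level before descending — more precisely, the sum over all nodes at depth $i$ of the branching degree is $O(n(G)^{i+1})$, and summing the per-node $O(n(G)+m(G)+k)$ cost over $i=0,\dots,\nu(G)$ gives the top term $O(n(G)^{\nu(G)+1}(n(G)+m(G)+k))$. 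A secondary, purely expository obstacle is reconciling the cost of the tree subroutine from~\cite{li2020steiner}: if its precise complexity is $O(k(n+m))$ rather than $O(n+m+k)$, I would either note that $k(n+m) = O(n(G)^{\nu(G)+1}(n+m+k))$ is still subsumed by the stated bound for the leaf contributions, or invoke a sharpened version of that algorithm; since the theorem statement writes $n(G)+m(G)+k$, I will adopt the convention that the tree algorithm is the improved linear-in-$k$ version and state this explicitly. With these two points handled, the remaining verifications — connectivity of $G-e$, the decrement of $\nu$, and the $O(n+m)$ cost of cycle detection — are routine.
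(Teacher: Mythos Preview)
Your proposal is correct and follows essentially the same approach as the paper: correctness via Theorem~\ref{ecc-on-graphs} (the paper simply cites it, while you spell out the induction on $\nu(G)$ through equation~\eqref{base}), and the running time via a recursion of depth $\nu(G)$ and branching factor at most $n$. The paper's version is more streamlined because it sets up the recurrence $T(v,s)=O(n+m)+\ell_1\,T(v,s-1)$ with base case $T(v,0)=O(kn)$ and expands it directly; the ``extra'' factor of $n$ you puzzle over is not from branching bookkeeping but simply the $O(kn)$ leaf cost multiplied by $O(n^{\nu})$ leaves, giving $O(kn^{\nu+1})\le O(n^{\nu+1}(n+m+k))$ since $k\le n$.
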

 
\begin{proof}
The correctness of the algorithm follows from Theorem~\ref{ecc-on-graphs}. 

For the time complexity, we first note that in Step 2 we can apply the linear  algorithm from~\cite{li2020steiner} and that for Step 9 we can use the BFS algorithm starting from $v$. 
 
For the rest of the proof set $s = \nu(G)$, $n = n(G)$, and $m = m(G)$. Let $T(v,s)$ be the running time of Algorithm~\ref{k-ecc}. By~\eqref{base} we have
\begin{equation}\label{complexity-form}
T(v,s)=\begin{cases}
        O(kn); & s=0,\\
        O(n+m)+\ell_{1}*T(v,s-1); & s>0,
        \end{cases}
\end{equation}
where $\ell_{1} = m(C)$. Setting $M$=$n+m$, we can argue as follows: 
\begin{align}
    T(v,s) & = O(M)+\ell_{1}*T(v,s-1)\nonumber \\
           & = O(M)(1+\ell_{1}+\ell_{1}*\ell_{2}+\cdots +\Pi_{i=1}^{s}\ell_{i})+(\Pi_{i=1}^{s}\ell_{i})*T(v,0)\nonumber\\
           & = O(M)(1+O(n)+O(n)*O(n)+...+\Pi_{i=1}^{s}O(n))+(\Pi_{i=1}^{s}O(n))*O(kn)\nonumber\\
           & = O(n^{s+1}(n+m+k)), \nonumber
\end{align}
and we are done. 
\end{proof}

Note that the time complexity of Algorithm~\ref{k-ecc} grows exponentially with $\nu(G)$ rather than with $k$. 

\section{Linear time to calculate Steiner $3$-eccentricities for all vertices}
\label{all-ecc}

As already mentioned, in~\cite{li2020average} a linear time algorithm to calculate the Steiner $3$-eccentricity of a vertex of a tree was designed. In case one wishes to determine the Steiner $3$-eccentricity of all vertices, for instance in order to compute the average Steiner $3$-eccentricity, then this approach yields a quadratic algorithm. In this section we demonstrate that also the Steiner $3$-eccentricity of all vertices of a tree can be computed in linear time. Moreover, we also extend this result to weighted trees. 

Let $T$ and $T_{r}$, respectively, be a rooted weighted tree on $n$ vertices and the subtree rooted at a vertex $r \in V$. In other words, $T_{r}$ is a subgraph induced on vertex $r$ and all of its descendants.

The root of $T$ is assigned by an arbitrary vertex. The weights of edges are stored in an adjacency list named as $adj$. The linear algorithm is two-stage DFS procedures.  Details for the DFS algorithm can be found in~\cite{CoLeRiSt09}. The first stage is to compute the longest paths from $v$ in the subtree rooted at $v$ for every vertex $v$, while the second one is to update the longest paths with the upwards path via parent node for the purpose of computing Steiner 3-eccentricities. The first stage and the second stage are respectively showed in Algorithms \ref{first-DFS} and \ref{second-DFS}.

\medskip
\begin{algorithm}[H]
\label{first-DFS}
\KwIn{The adjacency matrix of the tree $T$ with the root vertex $root$.}
\KwOut{The downwards arrays $path\_weight, path\_index, attached\_weight$.}
\BlankLine

 $path\_weight [v] = (0, 0, 0)$\;
 $path\_index [v] = (-1, -1, -1)$\;
 $attached\_weight [v] = (0, 0, 0)$\;
 \For{every neighbor $u$ of $v$} {
     \If {$parent[u] = -1$ and $u \neq root$} {
            $parent [u] = v$\;
            $DFS\_stage1 (u)$\;
            $update (v, u, adj\_weight [v][u] + path\_weight [0][u],
                 \max (path\_weight [1][u], attached\_weight [0][u]))$\;
     }
 }
\caption{DFS$\_$stage1 ($v$)}
\end{algorithm}

\medskip
\begin{algorithm}[H]
\label{second-DFS}
\caption{DFS$\_$stage2 ($v$)}

    \KwIn{The outputs of $DFS\_stage1$.}
    \KwOut{The arrays $path\_weight, path\_index, attached\_weight$.}
    \BlankLine

    $mark [v] = 1$\;
    $u = parent [v]$\;
    \If{$u \neq -1$} {
        $up\_path\_weight = 0$\;
        $up\_attached\_weight = 0$\;
        \If{$path\_index [0][u] \neq v$} {
            $up\_path\_weight = adj\_weight [u][v] + path\_weight [0][u]$\;
            \If{$path\_index [1][u] \neq v$} {
                $up\_attached\_weight = \max(path\_weight [1][u], attached\_weight [0][u])$\;
            }\Else{
                $up\_attached\_weight = \max(path\_weight [2][u], attached\_weight [0][u])$\;
            }
        }\Else{
            $up\_path\_weight = adj\_weight [u][v] + path\_weight [1][u]$\;
            $up\_attached\_weight = \max(path\_weight [2][u], attached\_weight [1][u])$\;
        }
        $update (v, u, up\_path\_weight, up\_attached\_weight)$\;
    }

   \For{every neighbor $u$ of $v$} {
     \If {$mark[u] = -1$} {
          $DFS\_stage2 (u)$\;
     }
 }

\end{algorithm}

\bigskip
The Steiner 3-eccentricity of the vertex $v$ can be computed as
$$
\epsilon_3(v) = path\_weight [v][0] + \max\{ path\_weight [v][1], attached\_weight [v][0] \},
$$
where $path\_weight [v]$ is initialized as  $(0, 0, 0)$ and represents the length of the longest path from the vertex $v$ in the subtree $T_v$; $path\_index [v]$ is initialized as $(-1, -1, -1)$ and represents the neighbor of $v$ on the longest path from the vertex $v$ in the subtree $T_v$; $attached\_weight [v]$ is initialized as $(0, 0, 0)$ and represents the length of the longest subpath attached at the longest path in the subtree $T_v$ strictly bellow $v$.

The main helper function
$update(v, u, new\_weight, new\_attached\_weight)$ is to keep the top three values for the longest paths coming from the vertex $v$ and implemented by the C code in Appendix.

The first stage $DFS\_stage1$ is to recursively traverse all neighbors of the vertex $v$. After the subtree is processed, we update the values for the node $v$ based on the values for each of the subtrees. We traverse the nodes in the pre-order phase in the second stage $DFS\_stage2$. We update the values for the root and then run computation for the neighbors.

\begin{theorem}\label{crrrectness}
There is a linear-time algorithm to calculate the Steiner $3$-eccentricities for all vertices in a weighted tree $T$.
\end{theorem}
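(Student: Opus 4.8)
The plan is to establish two things about the two-stage depth-first search of Algorithms~\ref{first-DFS} and~\ref{second-DFS}: that after both stages the displayed formula for $\epsilon_3(v)$ evaluates correctly at every vertex $v$, and that the whole procedure runs in $O(n)$ time on an $n$-vertex weighted tree $T$. I would split this into (i) a structural description of an optimal Steiner $3$-ecc $v$-tree that explains the formula, (ii) an inductive correctness proof for the two stages, and (iii) a running-time count.

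For (i), note that the minimal subtree of a tree on a terminal triple $\{v,x,y\}$ is the union of the three legs meeting at the median vertex of $v,x,y$, hence $\epsilon_3(v)=\max_{w}\bigl(d(v,w)+g_1(w)+g_2(w)\bigr)$, where $g_1(w)\ge g_2(w)$ are the two largest \emph{directional heights} at $w$ (the length of a longest path of $T$ that starts at $w$ with a prescribed first edge), the maximum being over the edges at $w$ other than the first edge of the path between $v$ and $w$, with absent values read as $0$. I would then show this maximum is attained at a vertex lying on a fixed longest path $L$ of $T$ with end-vertex $v$: given an optimal median $w_0$, let $z$ be the vertex at which the path from $v$ to $w_0$ leaves $L$; since the directional height of $z$ toward $w_0$ is at least $d(z,w_0)+g_1(w_0)$ and the directional height of $z$ along $L$ equals $|L|-d(v,z)$, which is at least $g_2(w_0)$, the value at $z$ is at least the value at $w_0$, so $z$ is optimal too. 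For a vertex $w$ on $L$ the directional height along $L$ is the largest one, hence $d(v,w)$ plus that height equals $|L|$, and the value at $w$ reduces to $|L|$ plus either a second path from $v$ leaving it along another edge (when $w=v$) or a longest branch leaving $L$ at an interior vertex $w\neq v$. After Stage~2 these two alternatives are exactly $path\_weight[v][1]$ and $attached\_weight[v][0]$, while $|L|=path\_weight[v][0]$, which is the displayed formula.

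For (ii), I would verify Stage~1 by induction on the subtree $T_v$ in post-order: assuming the three arrays are correct within each child subtree, the call $update(v,u,\ adj\_weight[v][u]+path\_weight[0][u],\ \max(path\_weight[1][u],attached\_weight[0][u]))$ supplies $v$ with the length of the longest path that first descends into the child $u$ together with the best branch attached along that path, and since $update$ keeps the three largest such lengths over the distinct child directions, each paired with its branch value, the invariant is restored at $v$. Then I would verify Stage~2 by induction going down $T$ in pre-order: when $v$ is reached its arrays already describe the optimum within $T_v$, and the block executed when $v$ has a parent $u$ inserts the single missing direction, namely the longest path from $v$ whose first edge leads to $u$. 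The case split on whether $path\_index[0][u]=v$, and inside it on whether $path\_index[1][u]=v$, is what makes the inserted path and its branch avoid re-entering $T_v$: when the recorded longest (respectively second-longest) path from $u$ runs through $v$ it is skipped in favour of the next-ranked one. After the single call $update(v,u,up\_path\_weight,up\_attached\_weight)$ the arrays at $v$ are globally correct, so the formula evaluates to $\epsilon_3(v)$ there; the root needs no second-stage update since its first-stage arrays are already global.

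For (iii), each stage is one depth-first traversal, so every vertex and every edge is handled a constant number of times; the work at a vertex is the loop over its incident edges plus a constant number of calls to $update$, and $update$ is $O(1)$ since it only manipulates length-three arrays. Reading the weighted adjacency list and finally evaluating the formula at every vertex are also $O(n)$ in total, so the algorithm runs in $O(n)$ time, which proves the theorem. I expect the two genuinely delicate points to be, first, the height comparisons in part~(i) that move an optimal median onto $L$, which must be checked uniformly over all positions of the median relative to $L$, including the boundary cases and ties among longest paths; and second, the Stage~2 case analysis together with the precise behaviour of $update$, where one must verify that discarding exactly those recorded paths which re-enter $T_v$ and promoting the next-ranked one still produces the true global optima of both $path\_weight[v]$ and $attached\_weight[v]$ — this is the reason the arrays must have length three and must carry a branch value alongside each recorded path.
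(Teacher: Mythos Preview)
Your proposal is correct and follows the same route as the paper: both verify the two-stage DFS procedure of Algorithms~\ref{first-DFS} and~\ref{second-DFS} and count the traversals to get $O(n)$ time. Your treatment is in fact considerably more rigorous than the paper's own proof, which simply asserts that correctness ``directly follows from the definition'' and that the second stage is ``equivalent as considering those vertices as roots''; your structural lemma in~(i) (that an optimal median may be taken on a longest path from $v$) and the post-order/pre-order inductions in~(ii) supply exactly the arguments the paper omits.
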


\begin{proof}
Given that the algorithm consists of two traversals using DFS algorithms and with linear initialization, the time complexity of the algorithm equals $O(n)$. There are six additional vectors of size $n$, and therefore the memory complexity of the algorithm equals $O(n)$ as well.

The correctness of the algorithm directly follows from the definition of Steiner 3-eccentricities for the root vertex $v$. For other vertices, we effectively compute the top three longest paths and attached paths - which is equivalent as considering those vertices as roots.
\end{proof}

\section{Conclusion}\label{conclusions}

We presented a linear-time algorithm to solve the $k$-ST problem on block graphs, and an algorithm to solve the $k$-ST problem on general graphs, where the exponential growth of the running time depends only on the cyclomatic number of a graph. 

It seems that if a graph is dense, then the Steiner $k$-eccentricity of a vertex may be easy to find. For instance, this is the case for complete graphs and for complete graphs with one or two edges removed. Inspired by this, an open question is how to modify Algorithm \ref{k-ecc} so that it works well not only for small values of $\nu$ but also when $\nu$ is large. 

For calculating the Steiner $k$-eccentricity of a vertex in a graph $G$, we showed that there is an algorithm  whose running time grows exponentially on $\mu$ but is independent of the input parameter $k$. On the other hand, is there is a fixed-parameter tractable algorithm \cite{Cygan2015ParameterizedA} to solve this problem? Moreover, there is still no answer to the question of whether the problem is NP-hard.

Let's turn back to the $k$-ST problem. One can also ask whether there is a $k$-set $S$ such that the size of the minimum Steiner tree on $S$ is at least $c$. This yields the \emph{Steiner $k$-eccentricity set problem} ($k$-SES). The decision version of the \emph{Steiner k-eccentricity set problem} ($k$-SES) is presented in Table~\ref{k-SESP}. 

\begin{table}[ht!]
	\centering  
	\caption{The Steiner k-eccentricity set problem ($k$-SES)}  
	\label{k-SESP}  
	\begin{tabular}{|l|}
		\hline
		Instance: Graph $G$, $v\in V(G)$, $k\in [n(G)]$, constant $c$.\\
		\hline
        Question: Is there  a $k$-set $S$, where $v\in S$, such that the size of a minimum \\
        \qquad\qquad\ \ Steiner tree on $S$ is at least $c$? \\
        \hline
	\end{tabular}
\end{table}

For every vertex $v$ in a graph $G$, there is a $k$-set $S_{1}$ such that a minimum Steiner tree on $S_{1}$ has at least $c$ edges if and only if there is a minimum Steiner tree on some $k$-set $S_{2}$ such that the size of  the Steiner tree is at least $c$, where $v\in S_{1}$ and $v\in S_{2}$. In other words, there is a "YES" answer to the $k$-ST problem if and only if there is a "YES" answer to the $k$-SES problem. Therefore,  the $k$-ST problem is as hard as the $k$-SES problem. However, algorithms to solve these two problems could be different. It seems that there is no brute-force algorithm to solve the $k$-SES problem.

Finally, we designed an $O(n)$-time algorithm to calculate Steiner $3$-eccentricities for all vertices of a tree. Does this result extend to $k\ge 4$?

\section*{Acknowledgements}

This work was supported by  Science Foundation of Guizhou University of Finance and Economics (2020XYB16), Science Foundation of Guizhou University of Finance and Economics (2019YJ058). Sandi Klav\v{z}ar acknowledges the financial support from the Slovenian Research Agency (research core funding P1-0297, and projects N1-0095, J1-1693, J1-2452).

\end{document}